\theoremstyle{plain}
\newtheorem{thm}{Theorem}[section]
\newtheorem{prop}[thm]{Proposition}
\theoremstyle{definition}
\newtheorem{defn}{Definition}[section]
\begin{document}

\setcounter {page}{1}
\title{ Convergence of partial maps via bornology through ideal and its characterization}

\author[P. Malik, Ar. Ghosh]{ Prasanta Malik*, Argha Ghosh*\ }
\newcommand{\acr}{\newline\indent}
\maketitle
\address{{*\,} Department of Mathematics, Burdwan University, Golapbag-713104, Burdwan, W. B, India. Email: pmalik@math.buruniv.ac.in, buagbu@yahoo.co.in\\}

\maketitle
\begin{abstract}In this paper we consider the idea of $I$ - convergence of nets of partial function from a metric space $(X,d)$ to a metric space $(Y,\mu)$ and derive several basic characterization. This idea extends the concept of convergence of nets of partial function introduced by  G. Beer et.al [1].
\end{abstract}
\author{}
\maketitle
{ Key words and phrases :} Partial map, Bornology, Ideal of a Directed set, Bornology, Strong uniform continuity, Graphical Convergence \\

\textbf {AMS subject classification (2010) :} 54E70.  \\

\section{\textbf{Introduction}}

We mean by a partial map or a partial function from a metric space $(X,d)$ to a metric space $(Y,\mu)$, a pair $(D,u)$ where $D$ is a nonempty closed subset of $X$ and $u:~ D\rightarrow Y$ is a function. Let us denote the set of all such partial maps by $\mathcal P[X,Y]$ and by $\mathcal C[X,Y]$ we mean those partial maps which are continuous on their respective domains. The notion of convergence of partial maps was first introduced by G. Beer et.al.[1]. The notion of $I$-convergence of nets was first introduced by B. K. Lahiri and P. Das [8]. In this paper we will use this two notion and introduced a new type of convergence on partial maps which will produced a new research area.

\section{\textbf{Preliminaries  }}
In this section we give some basic definitions and discuss some ideas which will helpful to understand this paper in the next section.
\begin{defn}$[]$
 If $X$ is a non-void set, then a family $I\subset 2^X$ is
called an ideal if\\
(i) $\phi\in I$ and\\
(ii) $A~;~B \in I$ implies $A \cup B \in I$ and\\
(iii) $A \in I$ ; $B\subset A $ implies $B \in I$.\\\\
The ideal $I$ is called non-trivial if $I\neq\left\{\phi\right\}$ and $X\notin I$.
\end{defn}
\begin{defn}$[]$
A non empty family $\mathbb F$ of subsets of a non-void set $X$ is a filter if\\
(i) $\phi\notin \mathbb F$ and\\
(ii) $A~;~B\in \mathbb F$ implies $A \cap B \in\mathbb F$ and\\
(iii) $A \in\mathbb F$ ; $A \subset B$ implies $B \in\mathbb F$.\\\\
Clearly $I \subset 2^X$ is a non-trivial ideal of $X$ if and only if $\mathbb F = \mathbb F(I) =\left\{A\subset X~:~ X\setminus A\in I\right\}$ is a filter on $X$, called the filter associated with $I$.
\end{defn}
The following two definitions are well known to all but for sake of completeness we give it below
\begin{defn}
 Let $\Gamma$ be a non-void set and let $\geq$ be a binary relation on $\Gamma$
such that $\geq$ is reflexive, transitive and for any two elements $m~;~ n \in\Gamma $, there
is an element $p \in\Gamma$ such that $p \geq m$ and $p \geq n$. The pair $(\Gamma;\geq)$ is called a
directed set.
\end{defn}
\begin{defn}
 Let $(\Gamma;\geq)$ be a directed set and let $X$ be a non-void set. A
mapping $\gamma :\Gamma\rightarrow X$ is called a net in $X$ denoted by $\left\{\gamma_n:n\in\Gamma\right\}$ or simply by
$\left\{\gamma_n\right\}$ when the set $\Gamma$ is clear.
\end{defn}
Throughout the paper $X = (X, d)$ and $Y = (Y,\mu)$ will denote metric spaces. We write $CL(X)$ for the
collection of the closed nonempty subsets of $X$, $K(X)$ is the collection of the compact nonempty subsets of
$X$. And by $\mathbb N$ we denote the set of all natural numbers and $I$ will denote a non-trivial ideal of a directed set $\Gamma$.

For $n \in\Gamma$ let $M_n = \left\{k\in\Gamma:k\geq n\right\}$. Then the collection 
\begin{center}
$\mathbb F_0 =\left\{A\subset\Gamma:A\supset M_n~for~some~n\right\}$
\end{center}
 forms a filter in $\gamma$. Let $I_0 =\left\{A\subset\Gamma:\Gamma\setminus A\in\mathbb F_0\right\}$. Then $I_0$ is also a non-trivial ideal in $\Gamma$.
\begin{defn}
 A non-trivial ideal $I$ of $\Gamma$ will be called $D-admissible$ if $M_n \in\mathbb F(I)$ for all $n \in \Gamma$.
\end{defn}
We now discuss the notion of bornology (for more details see [])\\
If $x_0 \in X$ and $\epsilon > 0$, $B(x_0, \epsilon)$ is the open $\epsilon$-ball with center $x_0$ and radius $\epsilon$. If $A$ is a nonempty subset of $X$,
we write $d(x_0,A)$ for the distance from $x_0$ to $A$. We denote by $A^\epsilon$ the $\epsilon$-enlargement of the set $A:$
\begin{center}
$A^\epsilon=\left\{x:d(x,A)<\epsilon\right\}=\underset{x\in A}{\bigcup}B(x,\epsilon)$.
\end{center}
\begin{defn}
A \textsl{bornology} $\mathcal B$ on a metric space $(X, d)$ is a family of subsets of $X$, covering $X$, closed under taking
finite unions, and hereditary, i.e., closed under taking nonempty subsets.
\end{defn}
The smallest bornology on $X$ is the family of the finite subsets of $X$, $\mathcal F$ , and the largest is the family of all non
empty subsets of $X$, $P_0(X)$. Other important bornologies are: the family $B_d$ of the nonempty $d$-bounded
subsets, the family $B_{tb}$ of the nonempty $d$-totally bounded subsets and the family $\mathcal K$ of nonempty subsets
of $X$ whose closure sets are compact.\\
We now give some basic definition related to bornological convergence as defined in([],[]).
\begin{defn}
Let $(X, d)$ be a metric space and $\mathcal B$ be a bornology on $(X,d)$. A net $\left\langle D_\gamma\right\rangle_{\gamma\in \Gamma}$ in $\mathcal P_0(X)$ is called $\mathcal B^-$-convergent (lower bornological convergent) to $D\in \mathcal P_0(X)$ if for every $B\in\mathcal B$ and $\epsilon>0$, the following inclusion holds eventually:
\begin{center}
 $D\cap B\subset D_\gamma^\epsilon.$ 
\end{center}
In this case we shall write $D\in\mathcal B^--lim ~D_\gamma$ when this holds. Similarly the net is called $\mathcal B^+$-convergent (upper bornological convergent) to $D\in\mathcal P_0(x)$ if for every $B\in\mathcal B$ and $\epsilon>0$,
\begin{center}
 $D_\gamma\cap B\subset D^\epsilon$.
\end{center}
  In this case we shall write $D\in\mathcal B^+-lim ~D_\gamma$ when this occurs.
\end{defn}
 
Naturally two-sided bornological convergence occurs when both upper and lower convergences occur, and we then write $D\in\mathcal B-lim ~D_\gamma$.
\begin{defn}
Let $(X,d)$, $(Y,\mu)$ be metric spaces, and $\mathcal B$ be a bornology on $X$. Let $\Gamma$ be a directed set and let $\left\langle \left\langle D_\gamma,u_\gamma\right\rangle\right\rangle_{\gamma\in\Gamma}$ be a net in $\mathcal P[X,Y].$ We say that the net is $\mathcal P(\mathcal B)$-convergent to $(D,u)$, we write $(D,u)\in\mathcal P(\mathcal B)-lim(D_\gamma,u_\gamma)$, if for every $B\in \mathcal B$ and $\epsilon>0$, the following two conditions hold for all indices $\gamma\geq\gamma_0$
\begin{center}
(i) for each nonempty subset $B_1$ of $ B,~u(D\cap B_1)\subset[U_\gamma(D_\gamma\cap B_1^\epsilon)]^\epsilon$
\end{center}
\begin{center}
(ii) for each nonempty subset $B_1$ of $ B,~u_\gamma(D_\gamma\cap B_1)\subset[U(D\cap B_1^\epsilon)]^\epsilon$.
\end{center}
\end{defn}

The most tangible and visual description of $\mathcal P(\mathcal B)$-convergence is the following: for each $B\in\mathcal B$ and $\epsilon>0$, eventually both $Gr(u_\gamma)\cap(B\times Y)\subset Gr(u)^\epsilon$ and $Gr(u)\cap(B\times Y)\subset Gr(u_\gamma)^\epsilon$. In this formulation, the enlargement is taken with respect to any metric compatible with the product uniformity. For definiteness, we choose the\textit{ box metric} defined by 
\begin{center}
$(d\times\mu)((x_1,y_1),(x_2,y_2)):=max\left\{d(x_1,x_2),\mu(y_1,y_2)\right\}$.
\end{center}
\begin{defn}
Let $(X,d)$, $(Y,\mu)$ be metric spaces, and $\mathcal B$ be a bornology on $X$. Let $\Gamma$ be a directed set and $\left\langle \left\langle D_\gamma,u_\gamma\right\rangle\right\rangle_{\gamma\in\Gamma}$ be a net in $\mathcal P[X,Y].$ We say that the net is $\mathcal P^-(\mathcal B)$-convergent to $(D,u)$, we write $(D,u)\in\mathcal P^-(\mathcal B)-lim(D_\gamma,u_\gamma)$, if for every $B\in \mathcal B$ and $\epsilon>0$, 
\begin{center}
for all $B_1(\subset B),~u(D\cap B_1)\subset[U_\gamma(D_\gamma\cap B_1^\epsilon)]^\epsilon$
\end{center}
holds. Similarly we can say the net is $\mathcal P^+(\mathcal B)$-convergent to $(D,u)$, we write $(D,u)\in\mathcal P^+(\mathcal B)-lim(D_\gamma,u_\gamma)$, if for every $B\in \mathcal B$ and $\epsilon>0$,
\begin{center}
for all $B_1(\subset B),~u_\gamma(D_\gamma\cap B_1)\subset[U(D\cap B_1^\epsilon)]^\epsilon$
\end{center}
holds.
\end{defn}

\section{\textbf{Main Results }}

$~~$ Let $(X, d)$ and $(Y,\mu)$ be metric spaces. In this section we investigate the notion of convergence of partial maps by ideals of directed sets. So we first give some definition.
\begin{defn}
Let $(X,d)$, $(Y,\mu)$ be metric spaces, and $\mathcal B$ be a bornology on $X$. Let $\Gamma$ be a directed set and $I$ be a nontrivial ideal of $\Gamma$ and $\left\langle \left\langle D_\gamma,u_\gamma\right\rangle\right\rangle_{\gamma\in\Gamma}$ be a net in $\mathcal P[X,Y].$ We say that the net is $\mathcal P_I(\mathcal B)$-convergent to $(D,u)$, we write $(D,u)\in\mathcal P_I(\mathcal B)-lim(D_\gamma,u_\gamma)$, if for every $B\in \mathcal B$ and $\epsilon>0$, the following two criteria 
\begin{center}
$(i)~~\left\{\gamma:\forall B_1(\subset B),~u(D\cap B_1)\subset[U_\gamma(D_\gamma\cap B_1^\epsilon)]^\epsilon\right\}\in\mathbb F(I).$
\end{center}
\begin{center}
$(ii)~~\left\{\gamma:\forall B_1(\subset B),~u_\gamma(D_\gamma\cap B_1)\subset[U(D\cap B_1^\epsilon)]^\epsilon\right\}\in\mathbb F(I)$
\end{center}
hold.
\end{defn}
\begin{defn}
Let $(X, d)$ be a metric space and $\mathcal B$ be a bornology on $(X,d)$. A net $\left\langle D_\gamma\right\rangle_{\gamma\in \Gamma}$ in $\mathcal P_0(X)$ is called $\mathcal B_I^-$-convergent (lower bornological $I$-convergent) to $D\in \mathcal P_0(X)$ if for every $B\in\mathcal B$ and $\epsilon>0$,
\begin{center}
 $\left\{\gamma:~D\cap B\subset D_\gamma^\epsilon\right\}\in\mathbb F(I).$ In this case we shall write $D\in\mathcal B_I^--lim ~D_\gamma$
\end{center}
when this holds. Similarly the net is called $\mathcal B_I^+$-convergent (upper bornological $I$-convergent) to $D\in\mathcal P_0(x)$ if for every $B\in\mathcal B$ and $\epsilon>0$,
\begin{center}
 $\left\{\gamma:~D_\gamma\cap B\subset D^\epsilon\right\}\in\mathbb F(I).$ In this case we shall write $D\in\mathcal B_I^+-lim ~D_\gamma$
\end{center}
when this occurs.
\end{defn}
\begin{defn}
Let $(X,d)$, $(Y,\mu)$ be metric spaces, and $\mathcal B$ be a bornology on $X$. Let $\Gamma$ be a directed set and $I$ be a nontrivial ideal of $\Gamma$ and $\left\langle \left\langle D_\gamma,u_\gamma\right\rangle\right\rangle_{\gamma\in\Gamma}$ be a net in $\mathcal P[X,Y].$ We say that the net is $\mathcal P_I^-(\mathcal B)$-convergent to $(D,u)$, we write $(D,u)\in\mathcal P_I^-(\mathcal B)-lim(D_\gamma,u_\gamma)$, if for every $B\in \mathcal B$ and $\epsilon>0$, 
\begin{center}
$\left\{\gamma:\forall B_1(\subset B),~u(D\cap B_1)\subset[U_\gamma(D_\gamma\cap B_1^\epsilon)]^\epsilon\right\}\in\mathbb F(I).$
\end{center}
holds. Similarly we can say the net is $\mathcal P_I^+(\mathcal B)$-convergent to $(D,u)$, we write $(D,u)\in\mathcal P_I^+(\mathcal B)-lim(D_\gamma,u_\gamma)$, if for every $B\in \mathcal B$ and $\epsilon>0$,
\begin{center}
$\left\{\gamma:\forall B_1(\subset B),~u_\gamma(D_\gamma\cap B_1)\subset[U(D\cap B_1^\epsilon)]^\epsilon\right\}\in\mathbb F(I)$
\end{center}
holds.
\end{defn}
\begin{prop}
Let $\left\langle \left\langle D_\gamma,u_\gamma\right\rangle\right\rangle_{\gamma\in\Gamma}$ be a net in $\mathcal P[X,Y]$, $\mathcal B$ be a bornology on the metric space $(X,d)$ and $I$ is an ideal of $\Gamma$.
\begin{center}
$(i)$ If $(D,u)\in \mathcal P_I^-(\mathcal B)-lim~(D_\gamma,u_\gamma)$, then $\forall B\in \mathcal B$ and $\forall\epsilon>0$,  $\left\{\gamma:~D\cap B\subset D_\gamma^\epsilon\right\}\in\mathbb F(I).$
\end{center}
\begin{center}
$(ii)$ If $(D,u)\in \mathcal P_I^+(\mathcal B)-lim~(D_\gamma,u_\gamma)$, then $\forall B\in \mathcal B$ and $\forall\epsilon>0$,  $\left\{\gamma:~D_\gamma\cap B\subset D^\epsilon\right\}\in\mathbb F(I).$
\end{center}
\end{prop}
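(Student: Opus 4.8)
The plan is to exploit the fact that in the definition of $\mathcal P_I^-(\mathcal B)$-convergence the inner quantifier ranges over \emph{all} nonempty subsets $B_1$ of $B$, in particular over singletons. First I would fix $B\in\mathcal B$ and $\epsilon>0$ and set
$$A:=\left\{\gamma:\forall B_1(\subset B),~u(D\cap B_1)\subset[U_\gamma(D_\gamma\cap B_1^\epsilon)]^\epsilon\right\},$$
which belongs to $\mathbb F(I)$ by hypothesis. Since $\mathbb F(I)$ is a filter, hence closed under supersets, it suffices to prove the set inclusion $A\subset\left\{\gamma:D\cap B\subset D_\gamma^\epsilon\right\}$.

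The key step is a pointwise argument. Let $\gamma\in A$ and pick any $x\in D\cap B$ (if $D\cap B=\emptyset$ the inclusion $D\cap B\subset D_\gamma^\epsilon$ is trivial). Apply the defining property of $A$ with the nonempty subset $B_1=\{x\}\subset B$: then $D\cap B_1=\{x\}$ and $u(D\cap B_1)=\{u(x)\}\neq\emptyset$, so $\{u(x)\}\subset[U_\gamma(D_\gamma\cap\{x\}^\epsilon)]^\epsilon$ forces the right-hand side to be nonempty. An $\epsilon$-enlargement of a set is nonempty only if the set itself is nonempty, so $U_\gamma(D_\gamma\cap\{x\}^\epsilon)\neq\emptyset$; since the image of the empty set is empty, this gives $D_\gamma\cap\{x\}^\epsilon\neq\emptyset$. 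As $\{x\}^\epsilon=B(x,\epsilon)$, this means $d(x,D_\gamma)<\epsilon$, i.e. $x\in D_\gamma^\epsilon$. Since $x\in D\cap B$ was arbitrary, $D\cap B\subset D_\gamma^\epsilon$, whence $A\subset\left\{\gamma:D\cap B\subset D_\gamma^\epsilon\right\}$ and the latter set lies in $\mathbb F(I)$. This proves $(i)$.

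For $(ii)$ I would run the symmetric argument. Fix $B\in\mathcal B$, $\epsilon>0$, set $A':=\left\{\gamma:\forall B_1(\subset B),~u_\gamma(D_\gamma\cap B_1)\subset[U(D\cap B_1^\epsilon)]^\epsilon\right\}\in\mathbb F(I)$, and show $A'\subset\left\{\gamma:D_\gamma\cap B\subset D^\epsilon\right\}$. Given $\gamma\in A'$ and $x\in D_\gamma\cap B$, take $B_1=\{x\}$: now $u_\gamma(D_\gamma\cap\{x\})=\{u_\gamma(x)\}\neq\emptyset$, so $[U(D\cap B(x,\epsilon))]^\epsilon\neq\emptyset$, hence $D\cap B(x,\epsilon)\neq\emptyset$ and $x\in D^\epsilon$; closure of $\mathbb F(I)$ under supersets then finishes the proof.

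I do not expect a genuine obstacle here; the only points needing care are the bookkeeping at the edges — checking $\{x\}\subset B$ and $\{x\}\subset D$ (automatic from $x\in D\cap B$, resp. $\{x\}\subset D_\gamma$ from $x\in D_\gamma\cap B$ in the second part), and treating the degenerate case $D\cap B=\emptyset$ (resp. $D_\gamma\cap B=\emptyset$), where the claimed inclusion holds for every $\gamma$ so that the index set is $\Gamma\in\mathbb F(I)$. Everything else reduces to the observation that $S^\epsilon=\emptyset$ if and only if $S=\emptyset$, together with the fact that $\mathbb F(I)$ is a filter.
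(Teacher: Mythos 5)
Your proposal is correct and follows essentially the same route as the paper's own proof: fix $B$ and $\epsilon$, specialize the defining condition to the singleton $B_1=\{x\}$ for $x\in D\cap B$ to conclude $D_\gamma\cap B(x,\epsilon)\neq\emptyset$, hence $x\in D_\gamma^\epsilon$, and then use that $\mathbb F(I)$ is closed under supersets. Your version is if anything slightly tidier, since you argue for every $\gamma\in A$ rather than a single chosen index and you explicitly handle the degenerate case $D\cap B=\emptyset$.
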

\begin{proof}
We only prove statement (i), one can prove statement (ii) similarly. Let $B\in\mathcal B$ and $\epsilon>0$ be given. By assumption we have 
\begin{center}
$A=\left\{\gamma:~\forall B_1(\subset B), u(D\cap B_1)\subset[u_\gamma(D_\gamma\cap B_1^\epsilon)]^\epsilon\right\}\in\mathbb F(I).$
\end{center}
 Since $I$ is nontrivial, we can choose $\gamma_1\in A$. Let $x\in D\cap B$. Now  with $B_1=\left\{x\right\}$ we get 
 \begin{center}
 $u(x)\in[u_{\gamma_1}(D_{\gamma_1}\cap\left\{x\right\}^\epsilon)]^\epsilon$.
 \end{center}
This means that for some $v\in D_{\gamma_1}\cap B_d(x,\epsilon)$ we have $\mu(u(x),u_{\gamma_1}(v))<\epsilon$. More particularly, $x\in B_d(v,\epsilon)\subset D_{\gamma_1}^\epsilon.$ Since $x\in D\cap B$ is arbitrary thus 
\begin{center}
$A\subset \left\{D\cap B\subset D_\gamma^\epsilon\right\}$. As $A\in\mathbb F(I)$ thus the later set. This completes the proof.
\end{center}

\end{proof}
\begin{prop}
 The condition  for every $B\in \mathcal B$ and $\epsilon>0$,
\begin{center}
$\left\{\gamma:\forall B_1(\subset B),~u_\gamma(D_\gamma\cap B_1)\subset[u(D\cap B_1^\epsilon)]^\epsilon\right\}\in\mathbb F(I)$
\end{center}
holds if and only if  for every $B\in \mathcal B$ and $\epsilon>0$,
\begin{center}
$\left\{\gamma:\underset{z\in D_\gamma\cap B}{\sup}~~\underset{x\in B_d(z,\epsilon)}{\inf}\mu(u(x),u_\gamma(z))<\epsilon\right\}\in\mathbb F(I)$
\end{center}
holds.
\end{prop}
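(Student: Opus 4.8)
The plan is to pass, for each fixed $B\in\mathcal B$ and $\epsilon>0$, from the ``for all subsets $B_1$'' formulation to an equivalent \emph{pointwise} (singleton) formulation, exactly as in the proof of the preceding proposition, and then to bridge the gap between that pointwise statement and the $\sup$ statement by exploiting the quantifier over all $\epsilon>0$ together with monotonicity of the $\epsilon$-enlargements. Throughout, it is understood that in $\inf_{x\in B_d(z,\epsilon)}\mu(u(x),u_\gamma(z))$ the point $x$ actually ranges over $D\cap B_d(z,\epsilon)$, since $u$ is defined only on $D$; I adopt the convention $\inf\emptyset=+\infty$.

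First I would fix $\gamma$, $B\in\mathcal B$ and $\epsilon>0$, abbreviate $\varphi_{\gamma,\epsilon}(z):=\inf_{x\in D\cap B_d(z,\epsilon)}\mu(u(x),u_\gamma(z))$ for $z\in D_\gamma\cap B$, and establish the equivalence
\[
\big[\,\forall B_1(\subset B),\ u_\gamma(D_\gamma\cap B_1)\subset[u(D\cap B_1^\epsilon)]^\epsilon\,\big]\iff\big[\,\forall z\in D_\gamma\cap B,\ \varphi_{\gamma,\epsilon}(z)<\epsilon\,\big].
\]
For ``$\Rightarrow$'' I would specialize the left-hand condition to $B_1=\{z\}$ with $z\in D_\gamma\cap B$, using $\{z\}^\epsilon=B_d(z,\epsilon)$, which forces $u_\gamma(z)\in[u(D\cap B_d(z,\epsilon))]^\epsilon$, i.e. $\varphi_{\gamma,\epsilon}(z)<\epsilon$. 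For ``$\Leftarrow$'', given $B_1\subset B$ and $z\in D_\gamma\cap B_1$, the right-hand condition produces $x\in D\cap B_d(z,\epsilon)$ with $\mu(u(x),u_\gamma(z))<\epsilon$; since $z\in B_1$ we have $B_d(z,\epsilon)\subset B_1^\epsilon$, hence $x\in D\cap B_1^\epsilon$ and $u_\gamma(z)\in[u(D\cap B_1^\epsilon)]^\epsilon$. This identifies the first index set in the statement with $A(B,\epsilon):=\{\gamma:\forall z\in D_\gamma\cap B,\ \varphi_{\gamma,\epsilon}(z)<\epsilon\}$; I write $S(B,\epsilon)$ for the second index set $\{\gamma:\sup_{z\in D_\gamma\cap B}\varphi_{\gamma,\epsilon}(z)<\epsilon\}$.

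For the direction ``second condition $\Rightarrow$ first condition'' I would simply note that $S(B,\epsilon)\subset A(B,\epsilon)$ for the very same $B$ and $\epsilon$, so since $\mathbb F(I)$ is a filter (hence closed under taking supersets) one gets $A(B,\epsilon)\in\mathbb F(I)$ directly. For the converse I would instead invoke the first condition at the parameters $B$ and $\epsilon/2$: because $B_d(z,\epsilon/2)\subset B_d(z,\epsilon)$, taking the infimum over the larger set can only decrease it, so $\varphi_{\gamma,\epsilon}(z)\le\varphi_{\gamma,\epsilon/2}(z)$ for every $z\in D_\gamma\cap B$; hence $\gamma\in A(B,\epsilon/2)$ implies $\sup_{z\in D_\gamma\cap B}\varphi_{\gamma,\epsilon}(z)\le\epsilon/2<\epsilon$, i.e. $A(B,\epsilon/2)\subset S(B,\epsilon)$, and again upward closedness of $\mathbb F(I)$ yields $S(B,\epsilon)\in\mathbb F(I)$.

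The argument is mostly bookkeeping, so the genuine ``hard part'' is the conceptual observation that the parameter-preserving equality $A(B,\epsilon)=S(B,\epsilon)$ is \emph{false} --- the condition ``$\sup<\epsilon$'' is strictly stronger than ``each value is $<\epsilon$'' --- which is precisely why the converse direction must shrink $\epsilon$ to $\epsilon/2$ and why the quantifier ``for every $B\in\mathcal B$ and $\epsilon>0$'' on both sides is essential. I would also explicitly record the harmless edge cases so the conventions stay consistent: if $D_\gamma\cap B=\emptyset$ both conditions hold trivially (empty $\sup$, vacuous $\forall$), and if $D\cap B_d(z,\epsilon)=\emptyset$ for some $z\in D_\gamma\cap B$ then $\varphi_{\gamma,\epsilon}(z)=+\infty$ and $\gamma$ lies in neither $A(B,\epsilon)$ nor $S(B,\epsilon)$.
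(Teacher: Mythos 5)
Your proof is correct and follows essentially the same route as the paper's: specialize the set-inclusion condition to singletons $B_1=\{z\}$, use $B_d(z,\epsilon)\subset B_1^\epsilon$ for the reverse implication, and bridge the gap between ``each value $<\epsilon$'' and ``$\sup<\epsilon$'' by invoking the hypothesis at $\epsilon/2$ together with monotonicity of the infimum in the ball radius. If anything, your write-up isolates the pointwise equivalence more cleanly than the paper, which makes the same moves with some notational slippage.
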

\begin{proof}
First we prove necessary part of the proposition. Let $B\in\mathcal B$ and $\epsilon>0$ be given. Then by assumption we have 
\begin{center}
$A=\left\{\gamma:\forall B_1(\subset B),~u_\gamma(D_\gamma\cap B_1)\subset[u(D\cap B_1^{\frac{\epsilon}{2}})]^{\frac{\epsilon}{2}}\right\}\in\mathbb F(I).$
\end{center}
Let us choose $z\in D_\gamma\cap B$ with $B_1=\{z\}$, $\gamma\in A$ we have $u_\gamma(z)\in [u(D\cap B_1^{\frac{\epsilon}{2}})]^{\frac{\epsilon}{2}}$. This means that for some $x\in D\cap B_d(z,\frac{\epsilon}{2})$ implies $\mu(u_\gamma(z),u(x))<\frac{\epsilon}{2}$. Then clearly $\underset{x\in D\cap B_d(z,\frac{\epsilon}{2})}{\inf}\mu(u_\gamma(z),u(x))<\frac{\epsilon}{2}$. Also 
\begin{center}
$\underset{z\in D_\gamma\cap B}{\sup}\underset{x\in D\cap B_d(z,\frac{\epsilon}{2})}{\inf}\mu(u_\gamma(z),u(x))\leq\frac{\epsilon}{2}<\epsilon$.
\end{center}
  But again
	\begin{center}
	 $\underset{z\in D_\gamma\cap B}{\sup}\underset{x\in D\cap B_d(z,\epsilon)}{\inf}\mu(u_\gamma(z),u(x))<\underset{z\in D_\gamma\cap B}{\sup}\underset{x\in D\cap B_d(z,\frac{\epsilon}{2})}{\inf}\mu(u_\gamma(z),u(x))<\epsilon$.
	\end{center}
	 Thus $A\subset \left\{\gamma:\underset{z\in D_\gamma\cap B}{\sup}\underset{x\in D\cap B_d(z,\epsilon)}{\inf}\mu(u_\gamma(z),u(x))<\epsilon\right\}.$  Since $A\in\mathbb F(I)$ thus the later one in $\mathbb F(I).$ Therefore the condition is necessary.
	
	Now we prove the sufficient part of the proposition. Suppose 
	\begin{center}
	$\{\gamma:\underset{z\in D_\gamma\cap B}{\sup}\underset{x\in D\cap B_d(z,\frac{\epsilon}{2})}{\inf}\mu(u_\gamma(z),u(x))<\epsilon\}\in\mathbb F(I)$.
	\end{center}
	 Clearly if $B_1\subset B$, we have 
	\begin{center}
	$A_1=\{\gamma:\underset{z\in D_\gamma\cap B_1}{\sup}\underset{x\in D\cap B_d(z,\frac{\epsilon}{2})}{\inf}\mu(u_\gamma(z),u(x))<\epsilon\}\in\mathbb F(I)$.
	\end{center}
	Let $\gamma\in A_1$ then we have for all $z\in D_\gamma\cap B_1$ there exists $x\in D\cap \{z\}^\epsilon\subset D\cap B_1^\epsilon$ with $\mu(u_\gamma(z),u(x))<\epsilon$. Thus 
	\begin{center}
	$A_1\subset\left\{\gamma:\forall B_1(\subset B),~u_\gamma(D_\gamma\cap B_1)\subset[u(D\cap B_1^\epsilon)]^\epsilon\right\}$.
	\end{center}
	But $A_1\in\mathbb F(I)$, thus the later set in $\mathbb F(I).$ Hence the condition is sufficient.
\end{proof}
 Similarly one can prove the following result.
\begin{prop}
 The condition  for every $B\in \mathcal B$ and $\epsilon>0$,
\begin{center}
$\left\{\gamma:\forall B_1(\subset B),~u(D\cap B_1)\subset[u_\gamma(D_\gamma\cap B_1^\epsilon)]^\epsilon\right\}\in\mathbb F(I)$
\end{center}
holds if and only if  for every $B\in \mathcal B$ and $\epsilon>0$,
\begin{center}
$\left\{\gamma:\underset{z\in D\cap B}{\sup}~~\underset{x\in B_d(z,\epsilon)\cap D_\gamma}{\inf}\mu(u(z),u_\gamma(x))<\epsilon\right\}\in\mathbb F(I)$
\end{center}
holds.
\end{prop}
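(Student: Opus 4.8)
The plan is to follow the proof of the preceding proposition almost verbatim, interchanging the roles of the limit pair $(D,u)$ and the net term $(D_\gamma,u_\gamma)$; the only property of $I$ that is needed is that its associated filter $\mathbb F(I)$ is a filter, hence closed under the formation of supersets. Write (A) for the condition that for every $B\in\mathcal B$ and $\epsilon>0$ the set $\{\gamma:\forall B_1(\subset B),\ u(D\cap B_1)\subset[u_\gamma(D_\gamma\cap B_1^\epsilon)]^\epsilon\}$ belongs to $\mathbb F(I)$, and (B) for the condition that for every $B\in\mathcal B$ and $\epsilon>0$ the set $\{\gamma:\sup_{z\in D\cap B}\inf_{x\in B_d(z,\epsilon)\cap D_\gamma}\mu(u(z),u_\gamma(x))<\epsilon\}$ belongs to $\mathbb F(I)$. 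I will show (A)$\Rightarrow$(B) and (B)$\Rightarrow$(A).

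For (A)$\Rightarrow$(B) I would fix $B\in\mathcal B$ and $\epsilon>0$, and apply (A) not to $\epsilon$ but to $\epsilon/2$, so that $A:=\{\gamma:\forall B_1(\subset B),\ u(D\cap B_1)\subset[u_\gamma(D_\gamma\cap B_1^{\epsilon/2})]^{\epsilon/2}\}\in\mathbb F(I)$. For a fixed $\gamma\in A$ and a fixed $z\in D\cap B$ I would take $B_1=\{z\}$, read off some $x\in D_\gamma\cap B_d(z,\epsilon/2)$ with $\mu(u(z),u_\gamma(x))<\epsilon/2$, note that the infimum over the larger index set $B_d(z,\epsilon)\cap D_\gamma$ is then also $<\epsilon/2$, and take the supremum over $z\in D\cap B$ to get $\sup_{z\in D\cap B}\inf_{x\in B_d(z,\epsilon)\cap D_\gamma}\mu(u(z),u_\gamma(x))\le\epsilon/2<\epsilon$. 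Thus $A$ is contained in the set occurring in (B), which therefore lies in $\mathbb F(I)$ because $\mathbb F(I)$ is upward closed.

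For (B)$\Rightarrow$(A) I would fix $B\in\mathcal B$ and $\epsilon>0$, start from $A:=\{\gamma:\sup_{z\in D\cap B}\inf_{x\in B_d(z,\epsilon)\cap D_\gamma}\mu(u(z),u_\gamma(x))<\epsilon\}\in\mathbb F(I)$, and observe that for any $B_1\subset B$ one has $D\cap B_1\subset D\cap B$, so $A$ is contained in its analogue $A_1$ with $D\cap B$ replaced by $D\cap B_1$, whence $A_1\in\mathbb F(I)$. Then for $\gamma\in A_1$ each $z\in D\cap B_1$ admits an $x\in D_\gamma\cap\{z\}^\epsilon\subset D_\gamma\cap B_1^\epsilon$ with $\mu(u(z),u_\gamma(x))<\epsilon$, i.e.\ $u(z)\in[u_\gamma(D_\gamma\cap B_1^\epsilon)]^\epsilon$; since $z$ and $B_1$ are arbitrary this gives $u(D\cap B_1)\subset[u_\gamma(D_\gamma\cap B_1^\epsilon)]^\epsilon$, so $A_1$ is contained in the set in (A), which is therefore in $\mathbb F(I)$.

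The one spot that needs a little care is the direction (A)$\Rightarrow$(B), where (A) must be invoked at scale $\epsilon/2$ in order to convert the non-strict estimate coming from the enlargements into the strict inequality $<\epsilon$ required by the supremum/infimum formulation; beyond that the argument is a purely mechanical transcription of the proof of the preceding proposition with $(D,u)$ and $(D_\gamma,u_\gamma)$ swapped, so I do not expect any genuine obstacle.
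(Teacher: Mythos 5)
Your argument is correct and is essentially the proof the paper intends: it is the proof of the preceding proposition with the roles of $(D,u)$ and $(D_\gamma,u_\gamma)$ interchanged (the paper itself only remarks that the result follows ``similarly''), including the key device of invoking the hypothesis at scale $\epsilon/2$ in the forward direction and using upward closure of $\mathbb F(I)$ throughout. Your write-up is in fact slightly cleaner than the template, since you use the correct non-strict comparison between the two sup--inf quantities where the paper writes a spurious strict inequality.
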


\begin{thm}
Let $\left\langle \left\langle D_\gamma,u_\gamma\right\rangle\right\rangle_{\gamma\in\Gamma}$ be a net of partial functions from the metric space $(X,d)$ to the metric space $(Y,\mu)$ and let $I$ be an ideal of $\Gamma$ and $\mathcal B$ be a bornology on $X$. Then for $(D,u)\in \mathcal P[X,Y]$ 
\begin{center}
$(1)~~ Gr(u)\in\mathcal (B_I^*)^--lim~Gr(u_\gamma)$ if and only if $(D,u)\in\mathcal P_I^-(\mathcal B)-lim(D_\gamma,u_\gamma)$;
\end{center}
\begin{center}
$(2)~~ Gr(u)\in\mathcal (B_I^*)^+-lim~Gr(u_\gamma)$ if and only if $(D,u)\in\mathcal P_I^+(\mathcal B)-lim(D_\gamma,u_\gamma)$.
\end{center}
\end{thm}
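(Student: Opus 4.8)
The plan is to prove the two equivalences by translating each side into the metric language on the product space $X\times Y$ equipped with the box metric. Recall that $(B\times Y)$-type bornological sets in $X\times Y$ are generated (up to the hereditary/finite-union closure) by the ``cylinders'' $B\times Y$ with $B\in\mathcal B$; here $\mathcal B^*$ denotes this induced bornology on $X\times Y$, and $(\mathcal B^*_I)^{\pm}$ the corresponding lower/upper bornological $I$-convergence of the nets of closed sets $\langle Gr(u_\gamma)\rangle$. Since both sides are ``for every $B$ and $\epsilon$, the set of good $\gamma$ lies in $\mathbb F(I)$'', it suffices to show that for fixed $B\in\mathcal B$ and $\epsilon>0$ the good-$\gamma$ sets coincide up to passing between $\epsilon$ and, say, $\epsilon/2$ (which does not affect membership in the filter, being a property required for all positive $\epsilon$). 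So the whole theorem reduces to a purely set-theoretic/metric comparison of inclusions, with the ideal playing no active role beyond this observation.

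For part (1), I would fix $B\in\mathcal B$ and $\epsilon>0$ and compare the set
\[
A_1=\bigl\{\gamma:\forall B_1(\subset B),\ u(D\cap B_1)\subset[u_\gamma(D_\gamma\cap B_1^\epsilon)]^\epsilon\bigr\}
\]
with the ``graphical'' set
\[
A_2=\bigl\{\gamma:\ Gr(u)\cap(B\times Y)\subset Gr(u_\gamma)^\epsilon\bigr\}.
\]
The key observation, using the box metric, is that a point $(x,u(x))\in Gr(u)\cap(B\times Y)$ (so $x\in D\cap B$) lies in $Gr(u_\gamma)^\epsilon$ exactly when there is $v\in D_\gamma$ with $d(x,v)<\epsilon$ and $\mu(u(x),u_\gamma(v))<\epsilon$, i.e. $v\in D_\gamma\cap B_d(x,\epsilon)=D_\gamma\cap\{x\}^\epsilon$ and $u_\gamma(v)\in B_\mu(u(x),\epsilon)$ — which is precisely the statement $u(x)\in[u_\gamma(D_\gamma\cap\{x\}^\epsilon)]^\epsilon$. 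Thus the graphical inclusion for $B$ is equivalent to condition (i) of Definition 3.3 restricted to \emph{singletons} $B_1=\{x\}$. The remaining point is to show that the singleton version already forces the version for all nonempty $B_1\subset B$: given $x\in D\cap B_1$, applying the singleton conclusion to $\{x\}$ gives $v\in D_\gamma\cap\{x\}^\epsilon$ with $\mu(u(x),u_\gamma(v))<\epsilon$, and since $x\in B_1$ we have $v\in B_1^\epsilon$, hence $v\in D_\gamma\cap B_1^\epsilon$ and $u(x)\in[u_\gamma(D_\gamma\cap B_1^\epsilon)]^\epsilon$. So $A_1=A_2$ for each $B$ and $\epsilon$, and intersecting over the filter-membership requirement for all $B,\epsilon$ yields the equivalence in (1). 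Part (2) is entirely symmetric: one checks that $Gr(u_\gamma)\cap(B\times Y)\subset Gr(u)^\epsilon$ is equivalent (via the box metric, reading $(z,u_\gamma(z))$ with $z\in D_\gamma\cap B$) to the singleton form of condition (ii), and the singleton form upgrades to arbitrary $B_1\subset B$ by the same enlargement argument; alternatively one invokes Proposition 3.2 to rephrase condition (ii) as a $\sup$-$\inf$ estimate, which is visibly the same as the graphical inclusion.

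The main obstacle — really the only nonroutine point — is the passage from the singleton form of conditions (i)/(ii) to the ``for all nonempty $B_1\subset B$'' form, and conversely checking that the apparently stronger ``all $B_1$'' form follows for free; this is where one must be careful that $B_1^\epsilon\supset\{x\}^\epsilon$ for $x\in B_1$ so that witnesses found for singletons remain valid for larger $B_1$, and also that $B_1\subset B$ implies $D_\gamma\cap B_1\subset D_\gamma\cap B$ in the other direction. A secondary, purely bookkeeping, subtlety is to make sure the induced bornology $\mathcal B^*$ on $X\times Y$ used in the statement is exactly the one generated by $\{B\times Y:B\in\mathcal B\}$, so that quantifying over $B^*\in\mathcal B^*$ in the bornological $I$-convergence reduces, by heredity and the fact that $Gr(u)$ meets $B^*$ inside some cylinder $B\times Y$, to quantifying over cylinders $B\times Y$ with $B\in\mathcal B$; once this is pinned down, the equivalence is a direct unwinding of definitions together with the single enlargement trick above.
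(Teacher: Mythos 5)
Your proof is correct, and it takes a noticeably more direct route than the paper's. The paper proves (1) by first routing condition (i) of the definition through two auxiliary reformulations --- Proposition 3.1 (giving $\{\gamma: D\cap B\subset D_\gamma^{\epsilon}\}\in\mathbb F(I)$) and Proposition 3.3 (the $\sup$-$\inf$ characterization) --- then intersecting the two resulting filter sets and checking the graphical inclusion for $\gamma$ in the intersection; the converse direction introduces an auxiliary $\eta<\epsilon$ and again passes through the $\sup$-$\inf$ form. You instead observe that, for each fixed $B$ and $\epsilon$, the set of indices satisfying the graphical inclusion $Gr(u)\cap(B\times Y)\subset Gr(u_\gamma)^{\epsilon}$ is \emph{literally equal} to the set of indices satisfying condition (i) for all nonempty $B_1\subset B$: one inclusion is the specialization to singletons $B_1=\{x\}$ (which, unwound through the box metric, is exactly membership of $(x,u(x))$ in $Gr(u_\gamma)^{\epsilon}$), and the other is your enlargement observation $\{x\}^{\epsilon}\subset B_1^{\epsilon}$ for $x\in B_1$, which upgrades singleton witnesses to arbitrary $B_1$. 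This makes the filter/ideal completely inert, needs no $\epsilon$-halving or intersection of filter sets (your own hedge about passing to $\epsilon/2$ turns out to be unnecessary), and dispenses with Propositions 3.1--3.3 as intermediaries. Both arguments rest on the same elementary unwinding of the box metric; yours buys a cleaner, self-contained identity of the ``good-index'' sets, while the paper's buys reusability of the $\sup$-$\inf$ reformulation in its later theorems (3.2 and 3.3 of the main section). Your closing remark about pinning down $\mathcal B^{*}$ as the bornology generated by the cylinders $B\times Y$ is a point the paper glosses over with ``it suffices to work with the basic sets,'' so flagging it explicitly is a genuine improvement rather than a gap.
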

\begin{proof}
We just verify statement (1). Suppose $(D,u)\in\mathcal P_I^-(\mathcal B)-lim(D_\gamma,u_\gamma)$. To verify bornology convergence of graphs, it is suffices to work with the basic sets in $\mathcal B^*$. Let $B\times Y$ be such a basic set where $B\in\mathcal B$. Let $\epsilon>0$ be given, we have by assumption 
\begin{center}
$A=\{\gamma:D\cap B\subset D_\gamma^\epsilon\}\in \mathbb F(I)$ and
\end{center}
\begin{center}
$B=\left\{\gamma:\underset{z\in D\cap B}{\sup}~~\underset{x\in B_d(z,\epsilon)\cap D_\gamma}{\inf}\mu(u(z),u_\gamma(x))<\epsilon\right\}\in\mathbb F(I)$.
\end{center}
But $A, ~B\in\mathbb F(I)\Rightarrow A\cap B\in\mathbb F(I).$ Choose $\gamma\in A\cap B$ then both 
\begin{center}
$(i)~~ D\cap B\subset D_\gamma^\epsilon$
\end{center}
\begin{center}
$(ii)~~\underset{z\in D\cap B}{\sup}~~\underset{x\in B_d(z,\epsilon)\cap D_\gamma}{\inf}\mu(u(z),u_\gamma(x))<\epsilon $
\end{center}
holds. Also for $\gamma\in A\cap B$ and $(z,u(z))\in (B\times Y)\cap Gr(u)$ so that $z\in D$. By $(i)$ $B_d(z,\epsilon)\cap D_\gamma\neq \phi$ and by $(ii)$ for some $x\in B_d(z,\epsilon)\cap D_\gamma$, we have $\mu(u_\gamma,u(z))<\epsilon.$ So we have $(x,u_\gamma(x))\in Gr(u_\gamma)$ and $(d\times \mu)((z,u(z))(x,u_\gamma(x)))<\epsilon$ and this yields $Gr(u)\cap (B\times Y)\subset Gr^\epsilon(u_\gamma)$. Therefore 
\begin{center}
$A\cap B\subset \{\gamma:~ Gr(u)\cap(B\times Y)\subset Gr^\epsilon(u_\gamma)\}.$
\end{center}
Since $A\cap B \in\mathbb F(I)$, so the later set.

 Conversely, we consider the lower bornological $I$-convergence of graphs. Let $B\in\mathcal B$ and $\epsilon>0$ be given. Choosing $0<\eta<\epsilon$, we have
 \begin{center}
	$A_1=\{\gamma:~ Gr(u)\cap(B\times Y)\subset Gr^\eta(u_\gamma)\}\in \mathbb F(I).$ 
 \end{center}
Let $\gamma\in A_1$ and let $z\in D\cap B$ be arbitrary. Clearly $(z,u(z))\in (B\times Y)\cap Gr(u)$, so there exists $(y_0,u_\gamma(y_0))\in Gr(u_\gamma)$ such that 
\begin{center}
$(d\times\mu)((z,u(z))(y_0,u_\gamma(y_0)))<\eta$.
\end{center}
Thus $d(z,y_0)<\eta<\epsilon$. By the same argument we can say $\mu(u(z),u_\gamma(y_0))<\eta$, so that $\underset{x\in B_d(z,\epsilon)\cap D_\gamma}{\inf}\mu(u(z),u_\gamma(x))<\eta$ and by taking the supremum over $z\in D\cap B$ we have 
\begin{center}
$\underset{z\in D\cap B}{\sup}\underset{x\in B_d(z,\epsilon)\cap D_\gamma}{\inf}\mu(u(z),u_\gamma(x))\leq\eta<\epsilon.$
\end{center}
Thus 
\begin{center}
$A_1\subset\{\gamma:\underset{z\in D\cap B}{\sup}\underset{x\in B_d(z,\epsilon)\cap D_\gamma}{\inf}\mu(u(z),u_\gamma(x))<\epsilon\}.$
\end{center}
Since $A_1\in\mathbb F(I)$ hence we have the required results.

\end{proof}

\begin{defn}
Let $\left\langle \left\langle D_\gamma,u_\gamma\right\rangle\right\rangle_{\gamma\in \Gamma}$ be a net of partial maps, $(D,u)\in\mathcal P[X,Y]$ and $I$ be an ideal of $\Gamma$. We say that $\left\langle \left\langle D_\gamma,u_\gamma\right\rangle\right\rangle_{\gamma\in \Gamma}$ is $I$-converges pointwise to $(D,u)$ if whenever $x\in D_\gamma$ for a cofinal subset $\Gamma_0\subset \Gamma$, then $x\in D$ and $u(x)=I_{\Gamma_0}-lim~u_\gamma(x)$.
\end{defn}
\begin{prop}
Let $\left\langle \left\langle D_\gamma,u_\gamma\right\rangle\right\rangle_{\gamma\in\Gamma}$ be a net in $\mathcal P[X,Y]$. If the net is $\mathcal P_I^+(\mathcal B)$-convergent to $(D,u)\in\mathcal C[X,Y]$, then it is pointwise $I$-convergent to $(D,u)$.
\end{prop}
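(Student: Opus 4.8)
The plan is to unpack the definition of $\mathcal P_I^+(\mathcal B)$-convergence at a single point and then extract the pointwise $I$-limit, using continuity of $u$ to control the limiting behaviour. Fix $x \in X$ and a cofinal subset $\Gamma_0 \subset \Gamma$ with $x \in D_\gamma$ for all $\gamma \in \Gamma_0$. We must show two things: first that $x \in D$, and second that $u(x) = I_{\Gamma_0}\text{-}\lim u_\gamma(x)$, where $I_{\Gamma_0}$ is the ideal induced on the directed set $\Gamma_0$.

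First I would show $x \in D$. Take the bornological set $B = \{x\}$ (a singleton is in every bornology since $\mathcal B$ covers $X$ and is hereditary). Applying Proposition 3.5's characterization, or directly the defining condition of $\mathcal P_I^+(\mathcal B)$-convergence with $B_1 = \{x\}$, the set $\{\gamma : u_\gamma(D_\gamma \cap \{x\}) \subset [u(D \cap \{x\}^\epsilon)]^\epsilon\}$ lies in $\mathbb F(I)$ for every $\epsilon > 0$. Since this filter set meets the cofinal set $\Gamma_0$ (indeed it meets every set not in $I$, and a cofinal set is not in $I$ when $I$ is, say, the relevant admissible ideal — here one uses that $\Gamma_0 \notin I$), there is some $\gamma \in \Gamma_0$ in it; for that $\gamma$, since $x \in D_\gamma \cap \{x\}$, we get $u_\gamma(x) \in [u(D \cap B_d(x,\epsilon))]^\epsilon$, which forces $D \cap B_d(x,\epsilon) \neq \emptyset$. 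Thus $d(x, D) < \epsilon$ for every $\epsilon > 0$, so $d(x,D) = 0$, and since $D$ is closed, $x \in D$.

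Next I would pin down the value of the limit. With $x \in D$ now established, fix $\epsilon > 0$ and use continuity of $u$ at $x$ to choose $\delta \in (0,\epsilon)$ with $\mu(u(x'), u(x)) < \epsilon$ whenever $x' \in D$ and $d(x',x) < \delta$. Apply the $\mathcal P_I^+(\mathcal B)$ condition with $B = \{x\}$ and radius $\delta$: the set $A = \{\gamma : u_\gamma(D_\gamma \cap \{x\}) \subset [u(D \cap B_d(x,\delta))]^\delta\}$ belongs to $\mathbb F(I)$. For $\gamma \in A \cap \Gamma_0$ we have $x \in D_\gamma$, so $u_\gamma(x) \in [u(D \cap B_d(x,\delta))]^\delta$, meaning there is $x' \in D$ with $d(x',x) < \delta$ and $\mu(u_\gamma(x), u(x')) < \delta$. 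Then $\mu(u_\gamma(x), u(x)) \le \mu(u_\gamma(x), u(x')) + \mu(u(x'), u(x)) < \delta + \epsilon < 2\epsilon$. Hence $A \cap \Gamma_0 \subset \{\gamma \in \Gamma_0 : \mu(u_\gamma(x), u(x)) < 2\epsilon\}$. Since $A \in \mathbb F(I)$, its trace $A \cap \Gamma_0$ lies in the induced filter $\mathbb F(I_{\Gamma_0})$ on $\Gamma_0$, so $\{\gamma \in \Gamma_0 : \mu(u_\gamma(x), u(x)) < 2\epsilon\} \in \mathbb F(I_{\Gamma_0})$. As $\epsilon > 0$ was arbitrary, $u(x) = I_{\Gamma_0}\text{-}\lim u_\gamma(x)$, completing the proof.

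The main obstacle I anticipate is the bookkeeping around restricting the ideal $I$ to the cofinal subset $\Gamma_0$: one needs that $\Gamma_0$, being cofinal, is not a member of $I$ (so that filter sets genuinely meet it), and one needs that the trace on $\Gamma_0$ of a set in $\mathbb F(I)$ lies in $\mathbb F(I_{\Gamma_0})$ — this is where the definition of the induced ideal and an admissibility-type hypothesis on $I$ must be invoked carefully. Everything else is a routine two-step triangle-inequality estimate combined with the $B_1 = \{x\}$ instance of the hypothesis; the genuine content is the interplay between continuity of the limit map and the singleton bornological test set.
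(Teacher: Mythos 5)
Your proof is correct and follows essentially the same route as the paper: test with the singleton $B=\{x\}$, use continuity of $u$ at $x$ to pick $\delta$, and finish with the triangle inequality; the paper merely phrases the second step through the equivalent graphical convergence and cites its Proposition 3.1 for $x\in D$, whose own proof is exactly your direct argument. The subtlety you flag --- that one needs $\Gamma_0\notin I$ for the filter sets to meet $\Gamma_0$ and for the trace ideal $I_{\Gamma_0}$ to be nontrivial --- is genuine, and the paper passes over it silently as well.
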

\begin{proof}
Let $\Gamma_0$ be a cofinal set of $\Gamma$ and let $\gamma\in\Gamma_0$ with $x\in D_\gamma$. By proposition 3.1, we have $x\in D$ because $D$ is closed. Again by continuity of $u$, we can choose $\delta<\frac{\epsilon}{2}$ such that if $d(z,x)<\delta$ then $\mu(u(z),u(x))<\frac{\epsilon}{2}.$ Also by $\mathcal (B_I^*)^+$-convergence of graphs 
\begin{center}
$B=\{\gamma\in\Gamma_0:(\{x\}\times Y)\cap Gr(u_\gamma)\subset Gr^\delta(u)\}\in \mathbb F(I_{\Gamma_0})$.
\end{center}
Choose $\gamma\in B$, then there exists $z_\gamma\in D$ with 
\begin{center}
$(d\times\mu)((x,u_\gamma(x))(z_\gamma,u(z_\gamma)))<\delta$.
\end{center}
So we have $\mu(u_\gamma(x),u(z_\gamma))<\delta$ and $d(x,z_\gamma)<\delta$. Again by triangle inequality 
\begin{center}
$\mu(u_\gamma(x),u(x))\leq \mu(u_\gamma(x),u(z_\gamma))+\mu(u(x),u(z_\gamma))<\frac{\epsilon}{2}+\frac{\epsilon}{2}=\epsilon$
\end{center}
Hence $B\subset\left\{\gamma:\mu(u_\gamma(x),u(x))<\epsilon\right\}$. Since $B\in \mathbb F(I_{\Gamma_0})$ thus the later set. This completes the proof.
\end{proof}
\begin{thm}
Let $\mathcal B$ be a bornology on $(X,d)$ and let $(D,u)$ be strongly uniformly continuous relative to $\mathcal B$ with values in $(Y,\mu)$. Then a net $\left\langle \left\langle D_\gamma,u_\gamma\right\rangle\right\rangle_{\gamma\in\Gamma}$ in $\mathcal P[X,Y]$ is $\mathcal P_I^+(\mathcal B)$-convergent to $(D,u)$ is equivalent to the condition $\forall B\in\mathcal B$ and $\epsilon>0$, there exists $\zeta>0$ such that 
\begin{center}
$\left\{\gamma:\underset{z\in D_\gamma\cap B}{\sup}~~\underset{x\in B_d(z,\zeta)\cap D}{\sup}~\mu(u(x),u_\gamma(z))<\epsilon\right\}\in\mathbb F(I).$ 
\end{center}

\end{thm}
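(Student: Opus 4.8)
The plan is to prove the equivalence by exploiting Proposition 3.2, which already characterizes $\mathcal P_I^+(\mathcal B)$-convergence in terms of the quantity $\sup_{z\in D_\gamma\cap B}\inf_{x\in B_d(z,\epsilon)\cap D}\mu(u(x),u_\gamma(z))$. Thus it suffices to show that, under the hypothesis of strong uniform continuity of $(D,u)$ relative to $\mathcal B$, the ``$\inf$'' condition of Proposition 3.2 is equivalent to the ``$\sup$'' condition in the statement. Notice that only the forward implication requires strong uniform continuity; the reverse implication is immediate, since $\inf_{x\in B_d(z,\zeta)\cap D}\mu(u(x),u_\gamma(z))\le\sup_{x\in B_d(z,\zeta)\cap D}\mu(u(x),u_\gamma(z))$, so any $\gamma$ in the ``$\sup$'' set for radius $\zeta$ lies in a suitable ``$\inf$'' set, and the corresponding filter membership transfers. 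I would dispose of this direction first, in one short paragraph, taking $\zeta$ itself as the required enlargement radius (possibly shrinking it below $\epsilon$) and invoking that $\mathbb F(I)$ is upward closed.

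For the nontrivial direction, suppose $(D,u)\in\mathcal P_I^+(\mathcal B)-\lim(D_\gamma,u_\gamma)$ and fix $B\in\mathcal B$ and $\epsilon>0$. By strong uniform continuity of $(D,u)$ relative to $\mathcal B$, there is a $\delta>0$ (associated to $B$ and $\epsilon/3$, say) such that for all $x\in D$ and all $x'\in X$ with $x\in B$ or $x'\in B$ and $d(x,x')<\delta$ we have $\mu(u(x),u(x'))<\epsilon/3$ — more precisely I would apply strong uniform continuity to pairs of points of $D$ lying near $B$: whenever $x,x'\in D$ with $d(x,x')<\delta$ and at least one of them in $B$ (or in a controlled enlargement of $B$), then $\mu(u(x),u(x'))<\epsilon/3$. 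Set $\zeta=\min\{\delta,\epsilon\}$. Applying Proposition 3.2 with the radius $\zeta$ (in place of its $\epsilon$) and with a target bound $\epsilon/3<\zeta$ where applicable — here I must be slightly careful and instead apply Proposition 3.2 directly with radius $\delta$ and bound $\epsilon/3$, obtaining that $A:=\{\gamma:\sup_{z\in D_\gamma\cap B}\inf_{x\in B_d(z,\delta)\cap D}\mu(u(x),u_\gamma(z))<\epsilon/3\}\in\mathbb F(I)$.

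The key step is then the pointwise estimate: fix $\gamma\in A$ and $z\in D_\gamma\cap B$. Choose $x_0\in B_d(z,\delta)\cap D$ with $\mu(u(x_0),u_\gamma(z))<\epsilon/3$. Now let $x$ be \emph{any} point of $B_d(z,\zeta)\cap D$. Since $d(z,x_0)<\delta$ and $d(z,x)<\zeta\le\delta$, we get $d(x,x_0)<2\delta$; and both $x_0$ and $x$ lie in $D$, within distance $\zeta\le\delta$ of the point $z\in B$. Using strong uniform continuity (with $\delta$ chosen for the bound $\epsilon/3$ and for the enlarged bornological set capturing points within $\delta$ of $B$, e.g. $B^\delta$, which still belongs to $\mathcal B$ up to passing to $B\cup B^\delta\in\mathcal B$), we obtain $\mu(u(x),u(x_0))<\epsilon/3$, hence
\[
\mu(u(x),u_\gamma(z))\le\mu(u(x),u(x_0))+\mu(u(x_0),u_\gamma(z))<\tfrac{\epsilon}{3}+\tfrac{\epsilon}{3}<\epsilon.
\]
Taking the supremum over $x\in B_d(z,\zeta)\cap D$ and then over $z\in D_\gamma\cap B$ gives $\sup_{z\in D_\gamma\cap B}\sup_{x\in B_d(z,\zeta)\cap D}\mu(u(x),u_\gamma(z))\le\epsilon<\infty$, actually $<\epsilon$ after a harmless adjustment (replace $\epsilon$ by $\epsilon/2$ throughout at the outset), so $A$ is contained in the ``$\sup$'' set for this $\zeta$, and upward closure of $\mathbb F(I)$ finishes the argument.

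The main obstacle I anticipate is bookkeeping around which bornological set the strong uniform continuity is applied to: the points $x,x_0$ are in $D$ but are only controlled through their proximity to $z\in B$, so one must work with $B\cup B^\delta$ (or an even coarser member of $\mathcal B$) and make sure the $\delta$ furnished by strong uniform continuity is chosen \emph{after} fixing that enlarged set — there is a mild circularity (the enlargement radius depends on $\delta$, which depends on the enlarged set) that I would resolve by first picking any $\delta_0>0$, forming $B':=B^{\delta_0}\cup B\in\mathcal B$, applying strong uniform continuity to $B'$ and $\epsilon/3$ to get $\delta_1$, and then setting $\delta=\min\{\delta_0,\delta_1\}$ and $\zeta=\min\{\delta,\epsilon\}$. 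The rest is the triangle inequality and the fact that $\mathbb F(I)$ is a filter, exactly as in the proofs of Propositions 3.1–3.2 and Theorem 3.5 above.
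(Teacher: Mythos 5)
Your overall strategy coincides with the paper's: the easy direction is dispatched by noting that the inner supremum dominates the inner infimum and invoking the sup--inf characterization of $\mathcal P_I^+(\mathcal B)$-convergence (Proposition 3.2), and the hard direction combines Proposition 3.2 (to produce one good point $x_0$ near each $z\in D_\gamma\cap B$) with strong uniform continuity and the triangle inequality (to upgrade from that one point to every point of a small ball), finishing with upward closure of $\mathbb F(I)$. That is exactly the paper's decomposition.

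Two steps, however, do not survive as written. First, the radii: you take $x_0\in B_d(z,\delta)\cap D$ and $x\in B_d(z,\zeta)\cap D$ with $\zeta\le\delta$, so you only obtain $d(x,x_0)<2\delta$, whereas the modulus $\delta_1\ge\delta$ furnished by strong uniform continuity controls pairs at distance less than $\delta_1$; since $2\delta$ may exceed $\delta_1$, the claimed bound $\mu(u(x),u(x_0))<\epsilon/3$ does not follow. The paper avoids this by working throughout with balls of radius $\delta/2$ (apply Proposition 3.2 at level $\delta/2$ and set $\zeta=\delta/2$), which forces $d(x,x_0)<\delta$; you should do the same. Second, your bookkeeping device $B':=B\cup B^{\delta_0}\in\mathcal B$ is unjustified: a bornology is closed under finite unions and nonempty subsets, not under enlargements, and stability under small enlargements is precisely the additional hypothesis of the \emph{following} theorem, not of this one. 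The paper instead extracts directly from strong uniform continuity of $(D,u)$ relative to $\mathcal B$ a $\delta$ such that $x,y\in D\cap B^\delta$ with $d(x,y)<\delta$ forces $\mu(u(x),u(y))<\eta$; the control on the enlargement comes from the continuity hypothesis itself, not from $B^{\delta_0}$ being a member of $\mathcal B$. With these two repairs your argument becomes essentially the paper's proof (the paper splits $\epsilon$ as $2\eta$ rather than into thirds, and additionally records $\{\gamma:D_\gamma\cap B\subset D^{\delta/2}\}\in\mathbb F(I)$ via Proposition 3.1, which in your version is implicit in the finiteness of the infimum).
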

\begin{proof}
Sufficient part of the statement follows from proposition 3.5. We only need to show that upper bornological $I$-convergence implies the sup-sup condition above.

 Let $B\in\mathcal B$ and $\epsilon>0$ be given. Let $\eta>0$ be such that $2\eta<\epsilon$. By strong uniform continuity of $u$ relative to $\mathcal B$, there exists $\delta$, $0<\delta<2\eta$ such that $x,y\in D\cap B^\delta$ and $d(x,y)<\delta$ implies $\mu(u(x),u(y))<\eta$. Again by assumption 
 \begin{center}
 $A=\left\{\gamma:~D_\gamma\cap B\subset D^{\frac{\delta}{2}}\right\}\in\mathbb F(I)$
 \end{center}
and
\begin{center}
 $B=\left\{\gamma:~\underset{z\in D_\gamma\cap B}{\sup}~~\underset{x\in B_d(z,\frac{\delta}{2})}{\inf}~\mu(u(x),u_\gamma(z))<\frac{\delta}{2}\right\}\in\mathbb F(I).$
\end{center}
Thus $A\cap B\in \mathbb F(I)$. Choose $\gamma\in A\cap B$. Then for every $z\in B\cap D_\gamma$, there exists $x_z\in B_d(z,\frac{\delta}{2})\cap D$ such that $\mu(u(x_z),u_\gamma(z))<\frac{\delta}{2}<\eta$. But since $B_d(z,\frac{\delta}{2})\subset B^{\delta}$, for every $x\in B_d(z,\frac{\delta}{2})\cap D\subset B^\delta\cap D$ we have by strong uniform continuity $\mu(u(x),u(x_z))<\eta$ because $d(x,x_z)<\delta$ . Thus for every $x\in B_d(z,\frac{\delta}{2})\cap D$ 
\begin{center}
$\mu(u(x),u_\gamma(z))\leq\mu(u(x),u(x_z))+\mu(u(x_z),u_\gamma(z))<\eta+\eta=2\eta$
\end{center}
and hence
\begin{center}
 $\underset{z\in D_\gamma\cap B}{\sup}~~\underset{x\in B_d(z,\frac{\delta}{2})\cap D}{\sup}~\mu(u(x),u_\gamma(z))\leq 2\eta<\delta$.
\end{center}
Choose $\zeta=\frac{\delta}{2}$ then we have 
\begin{center}
$A\cap B\subset \left\{\gamma:\underset{z\in D_\gamma\cap B}{\sup}~~\underset{x\in B_d(z,\zeta)\cap D}{\sup}~\mu(u(x),u_\gamma(z))<\epsilon\right\}.$ 
\end{center}
Since $A\cap B\in\mathbb F(I)$ thus the later set. This yields to the prove.
\end{proof}
\begin{thm}
Let $\mathcal B$ be a bornology on $(X,d)$ that is stable under small enlargement and let $(D,u)$ be uniformly continuous relative to $\mathcal B$ with values in $(Y,\mu)$. Then a net $\left\langle \left\langle D_\gamma,u_\gamma\right\rangle\right\rangle_{\gamma\in\Gamma}$ in $\mathcal P[X,Y]$ is $\mathcal P_I(\mathcal B)$-convergent to $(D,u)$ if and only if both the following two condition hold:\\\\
 $(1)~~$for each $B\in\mathcal B$ and $\epsilon>0$, there exists $\zeta>0$ such that 
\begin{center}
$\left\{\gamma:\underset{z\in D_\gamma\cap B}{\sup}~~\underset{x\in B_d(z,\zeta)\cap D}{\sup}~\mu(u(x),u_\gamma(z))<\epsilon\right\}\in\mathbb F(I)$ ;
\end{center}
$(2)~~$ for each $B\in \mathcal B$ and $\epsilon>0$,
\begin{center}
$\left\{\gamma: D\cap B\subset D_\gamma^\epsilon\right\}\in\mathbb F(I)$.
\end{center}

\end{thm}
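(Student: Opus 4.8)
The plan is to split $\mathcal P_I(\mathcal B)$-convergence into its lower and upper parts — by Definition 3.1 the net is $\mathcal P_I(\mathcal B)$-convergent to $(D,u)$ precisely when it is both $\mathcal P_I^-(\mathcal B)$- and $\mathcal P_I^+(\mathcal B)$-convergent to $(D,u)$ — and to treat the upper part through Theorem 3.6 and the lower part through Proposition 3.3. The first step is an observation about the limit alone: since $\mathcal B$ is stable under small enlargement, uniform continuity of $(D,u)$ relative to $\mathcal B$ is automatically \emph{strong} uniform continuity relative to $\mathcal B$. Indeed, given $B\in\mathcal B$ and $\epsilon>0$, pick $\delta_0>0$ with $B^{\delta_0}\in\mathcal B$; uniform continuity of $u$ on $D\cap B^{\delta_0}$ gives $\delta_1>0$, which may be taken $\le\delta_0$, such that $x,y\in D\cap B^{\delta_0}$ with $d(x,y)<\delta_1$ force $\mu(u(x),u(y))<\epsilon$, and since $B^{\delta_1}\subset B^{\delta_0}$ this is exactly the strong uniform continuity estimate at $B$ with parameter $\delta_1$. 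Consequently Theorem 3.6 applies, and the net is $\mathcal P_I^+(\mathcal B)$-convergent to $(D,u)$ if and only if condition $(1)$ holds.

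Granting this equivalence, the forward implication is immediate: if the net is $\mathcal P_I(\mathcal B)$-convergent to $(D,u)$, then it is $\mathcal P_I^-(\mathcal B)$-convergent, so $(2)$ holds by Proposition 3.1\,(i), and it is $\mathcal P_I^+(\mathcal B)$-convergent, so $(1)$ holds.

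For the converse, assume $(1)$ and $(2)$. By the equivalence above $(1)$ already gives $\mathcal P_I^+(\mathcal B)$-convergence, so it remains to prove $\mathcal P_I^-(\mathcal B)$-convergence, which by Proposition 3.3 reduces to showing that for every $B\in\mathcal B$ and $\epsilon>0$ the set $\{\gamma:\ \sup_{z\in D\cap B}\ \inf_{x\in B_d(z,\epsilon)\cap D_\gamma}\mu(u(z),u_\gamma(x))<\epsilon\}$ lies in $\mathbb F(I)$. Fix $B$ and $\epsilon$. Using stability choose $\delta_0>0$ with $B^{\delta_0}\in\mathcal B$ and $\delta_0<\epsilon$; apply $(1)$ to the set $B^{\delta_0}$ with tolerance $\epsilon/2$ to get $\zeta>0$ and a set $A_1\in\mathbb F(I)$ on which $\sup_{w\in D_\gamma\cap B^{\delta_0}}\sup_{v\in B_d(w,\zeta)\cap D}\mu(u(v),u_\gamma(w))<\epsilon/2$; put $\rho:=\min\{\zeta,\delta_0\}$, so $\rho<\epsilon$, and apply $(2)$ to $B$ with tolerance $\rho$ to get $A_2:=\{\gamma:\ D\cap B\subset D_\gamma^{\rho}\}\in\mathbb F(I)$. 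For $\gamma\in A_1\cap A_2$ and $z\in D\cap B$, membership in $A_2$ gives $x\in D_\gamma$ with $d(z,x)<\rho$; then $d(x,B)\le d(z,x)<\delta_0$, so $x\in D_\gamma\cap B^{\delta_0}$, while $z\in D$ and $d(x,z)<\zeta$, so $z\in B_d(x,\zeta)\cap D$, and the $A_1$-estimate yields $\mu(u(z),u_\gamma(x))<\epsilon/2$. Since $d(z,x)<\rho<\epsilon$ we also have $x\in B_d(z,\epsilon)\cap D_\gamma$, so $\inf_{x'\in B_d(z,\epsilon)\cap D_\gamma}\mu(u(z),u_\gamma(x'))<\epsilon/2$; taking the supremum over $z\in D\cap B$ bounds the quantity in the displayed set by $\epsilon/2<\epsilon$. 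Hence $A_1\cap A_2$ lies in that set, which is therefore in $\mathbb F(I)$, completing the converse.

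The substantive part is the converse's lower-convergence argument; everything else is assembly of the cited results. Within that argument the delicate point is the bookkeeping of the radii: $(1)$ must be run with tolerance $\epsilon/2$ (so that a supremum of strict inequalities stays strictly below $\epsilon$), the enlargement $B^{\delta_0}$ must be kept inside $\mathcal B$ (the sole use of stability, needed because the witness $x\in D_\gamma$ need not lie in $B$), and $\rho$ must be chosen $\le\min\{\zeta,\delta_0\}$ so that $x$ simultaneously lies in $B^{\delta_0}$ and is $\zeta$-close to $z$.
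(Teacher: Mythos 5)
Your proposal is correct and follows essentially the same route as the paper: decompose $\mathcal P_I(\mathcal B)$-convergence into upper and lower halves, get the upper half and condition $(1)$ from the sup--sup theorem for strongly uniformly continuous limits (noting that stability under small enlargement upgrades uniform to strong uniform continuity), and establish the lower half via the sup--inf characterization using the enlargement $B^{\delta_0}\in\mathcal B$. Your bookkeeping with the three radii $\delta_0,\zeta,\rho$ is in fact slightly more careful than the paper's, which collapses the enlargement parameter and the modulus from $(1)$ into a single $\zeta$.
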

\begin{proof}
Necessity follows from that $(D,u)$ uniformly is continuous relative to $\mathcal B$ implies strong uniform continuity relative to $\mathcal B$, by proposition 3.1 and theorem 3.2.
Let $B\in\mathcal B$ and $\epsilon>0$ be given, we only need to show that 
\begin{center}
$\left\{\gamma:\underset{z\in D\cap B}{\sup}~~\underset{x\in B_d(z,\epsilon)\cap D_\gamma}{\inf}~\mu(u(z),u_\gamma(x))<\epsilon\right\}\in\mathbb F(I)$.
\end{center}
Choose $\zeta<\epsilon$ so small that $B^\zeta\in\mathcal B$ and satisfies 
\begin{center}
$C=\left\{\gamma:~\underset{z\in D_\gamma\cap B^\zeta}{\sup}~~\underset{x\in B_d(z,\zeta)\cap D}{\sup}~\mu(u(x),u_\gamma(z))<\frac{\epsilon}{2}\right\}\in\mathbb F(I)$
\end{center}
and
\begin{center}
$E=\{\gamma: D\cap B\subset D_\gamma^\zeta\}\in \mathbb F(I)$.
\end{center}
Choose $\gamma\in C\cap E$ and let $z\in D\cap B $, so there exists $x(z,\gamma)\in D_\gamma\cap B^\zeta$ with $d(z,x(z,\gamma))<\zeta$. Again $\gamma\in C$, $x(z,\gamma)\in D_\gamma\cap B^\zeta$ and $z\in B_d(z,\zeta)\cap D$ implies $\mu(u(z),u_\gamma(x(z,\gamma)))<\frac{\epsilon}{2}$. Hence $\underset{x\in B_d(z,\zeta)\cap D_\gamma}{\inf}~\mu(u(z),u_\gamma(x(z,\gamma)))<\frac{\epsilon}{2}$. Also 
\begin{center}
$\underset{z\in D\cap B}{\sup}~~\underset{x\in B_d(z,\epsilon)\cap D_\gamma}{\inf}\mu(u(z),u_\gamma(x))\leq\underset{z\in D\cap B}{\sup}~~\underset{x\in B_d(z,\zeta)\cap D_\gamma}{\inf}\mu(u(z),u_\gamma(x))\leq \frac{\epsilon}{2}<\epsilon$. 
\end{center}
Therefore $C\cap E\subset \left\{\gamma:\underset{z\in D\cap B}{\sup}~~\underset{x\in B_d(z,\epsilon)\cap D_\gamma}{\inf}\mu(u(z),u_\gamma(x))<\epsilon\right\}.$ Now $C\cap E\in \mathbb F(I)$. So the required result.
 
\end{proof}
\noindent\textbf{Acknowledgement:} The work of the second author was supported by University Grants Commission, New Delhi, India, under UGC-JRF in Mathematical Sciences.\\



\begin{thebibliography}{99}
\bibitem{1} Beer. G, Caserta. A, Maio. G.D and  Lucchtti. R,(2014): \textit{Convergence of Partial Maps}, J. Math. Anal. Appl., 419, 1274-1289
\bibitem{2} Das, P. and Malik, P.(2007-2008):\textit{On the statistical and $I$-variation of double sequences}, Real Anal. Exchange, 33, no.2, 351-364.
\bibitem{3} Das, P. and Malik, P.(2008): \textit{On the extremal $\mathcal{I}$-limit points of Double Sequences}, Tatra Mt. Math. Publ., 40, 91-102.
\bibitem{4} Fast, H.(1951): \textit{Sur la convergence statistique}, Colloq. Math., 2, 241-244.
\bibitem{5} Fridy, J. A.(1985):\textit{On statistical convergence}, Analysis, 5, no.4, 301-313.
\bibitem{6} Fridy, J. A.(1993): \textit{Statistical limit points}, Proc. Amer. Math. Soc., 118, no.4, 1187-1192.
\bibitem{7} Lechicki. A, Levi. S, Spakowski. A(2007), \textit{Bornological Convergence}, J. Math. Anal. Appl., 751-770.
\bibitem{8} Lahiri. B. K and Das. P,(2007-2008),\textit{ $I$ and $I^*$-Convergence of nets}, Real Analysis Exchange, 33(2), 431-442. 
\bibitem{9} Schoenberg, I.J.(1959): \textit{The integrability of certain functions and related summability methods}, Amer. Math. Monthly, 66, 361-375.

\end{thebibliography}
\end{document}